\newtheorem{theorem}{Theorem}
\newtheorem{lemma}[theorem]{Lemma}
\newtheorem{question}[theorem]{Question}
\newtheorem{corollary}[theorem]{Corollary}
\newtheorem{definition}[theorem]{Definition}
\newtheorem{observation}[theorem]{Observation}
\newcommand{\Z}{\mathbb{Z}}
\newcommand{\R}{\mathbb{R}}
\newcommand{\Gr}{\text{Gr}}
\renewcommand{\epsilon}{\varepsilon}
\newcommand{\cpoints}{\rho}
\newcommand{\cplane}{\nu}
\newcommand{\p}{\mathbf{p}}
\newcommand{\q}{\mathbf{q}}
\title{Youden's Demon is Sylvester's Problem}
\author{Florian Frick \and Andrew Newman \and Wesley Pegden}
\address[FF]{Dept. Math. Sciences, Carnegie Mellon University, Pittsburgh, PA 15213, USA}
\email{frick@cmu.edu}
\address[AN]{Dept. Math. Sciences, Carnegie Mellon University, Pittsburgh, PA 15213, USA}
\email{anewman@andrew.cmu.edu}
\address[WP]{Dept. Math. Sciences, Carnegie Mellon University, Pittsburgh, PA 15213, USA}
\email{wes@math.cmu.edu}
\subjclass[2020]{52B05, 52B35, 62E17}
\date{\today}
\thanks{FF was supported by NSF CAREER Grant DMS 2042428. WP was supported by NSF grant DMS 1700365.}
\begin{document}

\maketitle
\begin{abstract} If four people with Gaussian-distributed heights stand at Gaussian positions on the plane, the probability that there are exactly two people whose height is above the average of the four is exactly the same as the probability that they stand in convex position; both probabilities are $\frac 6 \pi \arcsin\left(\frac{1}{3}\right)\approx .649$.  We show that this is a special case of a more general phenomenon: The problem of determining the position of the mean among the order statistics of Gaussian random points on the real line (``Youden's demon problem'') is the same as a natural generalization of Sylvester's Four Point problem to Gaussian points in $\R^d$.  Our main tool is the observation that the Gale dual of independent samples in $\R^d$ itself can be taken to be a set of independent points (conditioned on barycenter at the origin) when the distribution of the points is Gaussian.
\end{abstract}

\section{Introduction}
Sylvester's Four Point problem, first posed in \cite{Sylvester} in 1864, is often said to be the starting point for the study of random polytopes. In its original formulation, Sylvester's Four Point problem asks for the probability that four randomly selected points in $\R^2$ are in convex position. Obviously though this probability will depend on the underlying distribution. The early history of this problem is outlined in \cite{Pfiefer}, but the short version is that early on this question was considered for the uniform distribution on a convex region $K$ in the plane. In this direction Sylvester himself showed that the probability in question is 2/3 if $K$ is a triangle; Woolhouse showed that the answer is $1 - \frac{35}{12\pi^2} \approx .704$ if $K$ is a disk, and then Blaschke \cite{Blaschke} proved that among convex regions in the plane the triangle and the disk respectively minimize and maximize the answer to Sylvester's question. Generalization of Sylvester's problem extending the number of points or the dimension of the ambient space or both have been considered in the years since, for example \cite{BaranyPlane} studied the question of $n$ uniform points in a convex body in $\R^2$ to be in convex position and Kingman~\cite{Kingman} gave an exact formula for the case of $d + 2$ points in $\R^d$ sampled uniformly at random from the $d$-ball. See Schneider's chapter~\cite{schneider2017} in the \emph{Handbook of Discrete and Computational Geometry} for historical context and details.

The question of Sylvester's problem for Gaussian points has apparently drawn less attention than the uniform case.  The only solution we are aware of is for the case of four points in $\R^2$, as obtained by Maehara \cite{Maehara}, and later Blatter \cite{Blatter} by a different proof.  They showed that the probability that four points distributed as standard Gaussians in the plane are in convex position is
\begin{equation}
\label{X2} \frac{6}{\pi} \arcsin\left( \frac{1}{3} \right) \approx .649.
\end{equation}

Naturally, Sylvester's problem can be generalized to the question of determining the probability that $d + 2$ random points in $\R^d$ are in convex position. But beyond this, for $d \geq 4$ we can consider the stronger question on the distribution of the {Radon partition} of the random points. We recall Radon's Theorem:

\begin{theorem}[Radon~\cite{Radon}]\label{t.radon}
Any set $S$ of $d + 2$ points in $\R^d$ can be partitioned into two sets $S_1$ and~$S_2$ so that the convex hull of $S_1$ intersects the convex hull of~$S_2$. If the points are in general position then the partition is unique up to swapping $S_1$ and~$S_2$.
\end{theorem}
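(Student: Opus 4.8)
The plan is to produce an affine dependence among the $d+2$ points by a dimension count and then split the point set according to the signs of the coefficients. Writing $S=\{p_1,\dots,p_{d+2}\}$, I would first consider the homogeneous linear system $\sum_{i=1}^{d+2}\lambda_i p_i = 0$ together with $\sum_{i=1}^{d+2}\lambda_i = 0$. This is a system of $d+1$ scalar equations ($d$ from the first vector equation, one from the second) in the $d+2$ unknowns $\lambda_1,\dots,\lambda_{d+2}$, so its solution space has dimension at least one; fix a nontrivial solution $\lambda=(\lambda_1,\dots,\lambda_{d+2})$. Since the $\lambda_i$ sum to zero and are not all zero, the set $I=\{i:\lambda_i>0\}$ is a proper nonempty subset of $\{1,\dots,d+2\}$. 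Letting $\sigma=\sum_{i\in I}\lambda_i = -\sum_{i\notin I}\lambda_i>0$ and dividing through, I get $\sum_{i\in I}(\lambda_i/\sigma)p_i = \sum_{i\notin I}(-\lambda_i/\sigma)p_i$, where both sides are genuine convex combinations (the coefficients are nonnegative and sum to $1$, with the indices having $\lambda_i=0$ simply contributing nothing). Taking $S_1=\{p_i:i\in I\}$ and $S_2=S\setminus S_1$ then exhibits a point in $\operatorname{conv}(S_1)\cap\operatorname{conv}(S_2)$.

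For the uniqueness clause, suppose the points are in general position, meaning every $d+1$ of them are affinely independent. I would show the solution space of the linear system above is exactly one-dimensional: if it had dimension at least two, then intersecting it with some coordinate hyperplane $\{\lambda_j=0\}$ would still leave a nontrivial solution, giving a nonzero affine dependence supported on only $d+1$ of the points, a contradiction. The same reasoning shows that for the (essentially unique) solution $\lambda$, no coordinate $\lambda_j$ can vanish. Hence the sign pattern of $\lambda$ is determined up to a single global sign flip, and therefore the induced partition $\{S_1,S_2\}$ is determined up to interchanging the two blocks.

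I do not anticipate a serious obstacle here — this is the classical argument of Radon — but the step requiring the most care is the general-position case: one must check that the hypothesis is being used in its correct form (equivalently, that no $d+1$ of the points lie in a common affine hyperplane) and that this single hypothesis simultaneously forces both the one-dimensionality of the solution space and the non-vanishing of every coordinate of $\lambda$, since it is the latter that rules out ambiguity coming from points that could be assigned to either block.
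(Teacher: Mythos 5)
Your proof is correct and is essentially the same argument as the paper's: both extract a nontrivial vector in the kernel of the $(d+1)\times(d+2)$ system encoding affine dependence, split the points by the signs of its coordinates, and normalize to exhibit a common point of the two convex hulls, with general position forcing a one-dimensional kernel with no vanishing coordinate for uniqueness. Your treatment of the uniqueness clause is in fact slightly more detailed than the paper's, which asserts it in one line.
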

\noindent The partitions of $n$ points into two sets whose convex hulls intersect are the \textbf{Radon partitions} of the point set. For $n \geq d + 2$, Theorem \ref{t.radon} guarantees at least one Radon partition. The other partitions of the $n$ points are called the \textbf{affine separations} and are exactly the ways of dividing the points into two subsets so that there is an affine hyperplane between them.

From the view of Radon's Theorem, the question of whether $d + 2$ points lie in convex position in $\R^d$ is the same as asking whether the Radon partition is \emph{not} a $(1, d + 1)$ split. A natural extension of Sylvester's problem in higher dimensions is thus to to ask about the full distribution on the splits:
\begin{question}
Let $X_d$ be the random variable counting the number of points in the smaller side of the Radon partition of $d + 2$ standard Gaussian points in $\R^d$. What is the distribution of $X_d$?
\end{question}

The table below summarizes, for small values of $d$, an empirical distribution of $X_d$ across 1000 samples each.

\begin{table}[h!]
\centering
\caption{Empirical Distribution for \(X_d\)}
\begin{tabular}{|c|c|c|c|c|}
\hline
$d \downarrow$, $X_d \rightarrow$  & 1 & 2 & 3 & 4 \\ \hline
2 & .341 & .659 & - & - \\ \hline
3 & .091 & .909 & - & - \\ \hline
4 & .031 & .460 & .509 & - \\ \hline
5 & .008 & .180 & .812 & - \\ \hline
6 & .000 & .074 & .475 & .451 \\ \hline
\end{tabular}
\end{table}
As expected for $d = 2$, the empirical distribution is in line with Maehara's result. Is it possible to find exact values for the true distribution for higher values of $d$?

In what might initially seem like an unrelated question, \textbf{Youden's Demon} problem in statistics is to determine the probability that the sample mean $\bar Z$ among $n$ i.i.d. samples $Z_1,\dots,Z_n$ lies between the the $k$th and $(k+1)$st largest (or smallest) sample.  Letting $P(n,k)$ be the probability that the mean $\bar Z$ lies between the $k$th largest and $(k+1)$st largest samples $Z_k,Z_{k+1}$ in the Gaussian case $Z_i\sim N(0,1)$, David in 1963 \cite{David} proved the following:
\begin{theorem}[David~\cite{David}]
    \begin{align}
        \label{P42} P(4,2) &= \frac{6}{\pi} \arcsin\left( \frac{1}{3} \right) \approx .649.\\
        \label{P52} P(5,2) &= \frac{1}{4} + \frac{5}{2\pi} \arcsin \left(\frac{1}{4} \right)\approx .451.\\
        \label{Pnk} P(n,k)&\sim \left(\frac{k^{n-k-1}e^{n-2k}}{2(k!)^2 n^{n-3k-1}(2\pi)^{n-k}}\right)^{1/2}.
        %\frac{1}{2} + \frac{5}{\pi} \arcsin \left(\frac{1}{4} \right) \approx .902.
    \end{align}
\end{theorem}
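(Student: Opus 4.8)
The plan is to deduce all three formulas from one reduction. Write $\bar Z=\tfrac1n\sum_iZ_i$ and $W_i=Z_i-\bar Z$; then $W=(W_1,\dots,W_n)$ is the standard Gaussian on the hyperplane $H=\{x\in\R^n:\sum_ix_i=0\}$ and is independent of $\bar Z$. The event that $\bar Z$ lies strictly between the $k$th and $(k{+}1)$st largest $Z_i$ is precisely the event that exactly $k$ of the $W_i$ are positive, so, writing $\sigma_{n,k}$ for the probability that $W_1,\dots,W_k>0>W_{k+1},\dots,W_n$, exchangeability gives $P(n,k)=\binom{n}{k}\sigma_{n,k}$. Thus $\sigma_{n,k}$ is the Gaussian measure on $H$ of the polyhedral cone $K_{n,k}=\{x\in H:x_1,\dots,x_k\ge0\ge x_{k+1},\dots,x_n\}$, equivalently the normalized $(n-2)$-dimensional volume of the spherical polytope $K_{n,k}\cap S^{n-2}$, equivalently an orthant probability for the degenerate equicorrelated Gaussian $W$ (common correlation $-\tfrac1{n-1}$). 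Under the Gale-duality picture developed below, $\sigma_{n,k}$ is also the probability that $n$ i.i.d.\ Gaussian points in $\R^{n-2}$ admit a $(k,n-k)$ Radon partition -- so $P(4,2)$ is the probability that four Gaussian points in $\R^2$ are in convex position -- but the two small cases can be computed directly.

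For $P(4,2)$ the cone $K_{4,2}$ lies in the $3$-dimensional space $H$, and $K_{4,2}\cap S^2$ is a convex spherical quadrilateral: its four facets are the arcs $\{x_i=0\}$, its four vertices are the rays spanned by $(0,1,0,-1)$, $(0,1,-1,0)$, $(1,0,-1,0)$, $(1,0,0,-1)$, and the coordinate pairs $\{x_1=x_2=0\}$, $\{x_3=x_4=0\}$ meet $K_{4,2}$ only in $0$. Projecting the standard coordinate normals onto $H$, one checks that at each vertex the two outward facet normals make angle $\arccos\tfrac13$, so every interior angle of the quadrilateral equals $\pi-\arccos\tfrac13$; this $\arccos\tfrac13$ is the dihedral angle of the regular tetrahedron, which is the source of the $\tfrac13$. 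By the spherical-excess (Girard) formula the quadrilateral has area $4\big(\pi-\arccos\tfrac13\big)-2\pi=2\pi-4\arccos\tfrac13$, so $\sigma_{4,2}=\tfrac12-\tfrac1\pi\arccos\tfrac13$ and $P(4,2)=6\sigma_{4,2}=3-\tfrac6\pi\arccos\tfrac13=\tfrac6\pi\arcsin\tfrac13$. For $P(5,2)$ one runs the same scheme one dimension up: $K_{5,2}\cap S^3$ is a spherical polytope in $S^3$, whose $3$-volume I would evaluate with the Schläfli differential formula -- integrating $d(\text{vol}_3)=\tfrac12\sum_{\text{edges}}\ell(e)\,d\theta_e$ as the correlation is slid from the orthogonal (cube-corner) base case, where the volume is known -- together with the Gram--Euler relations; the dihedral angles encountered involve $\arccos\tfrac14$, and the full symmetry group $\Sigma_2\times\Sigma_3$ of $K_{5,2}$ collapses the answer to a single arcsine, giving $\sigma_{5,2}=\tfrac1{40}+\tfrac1{4\pi}\arcsin\tfrac14$ and hence $P(5,2)=\binom52\sigma_{5,2}=\tfrac14+\tfrac5{2\pi}\arcsin\tfrac14$.

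For the asymptotics one makes $\sigma_{n,k}$ explicit as a Gaussian integral over the cone $K_{n,k}$; in a coordinate chart it becomes $\frac{\sqrt n}{(2\pi)^{(n-1)/2}}\int_{R}e^{-\frac12\,x^\top(I+\mathbf 1\mathbf 1^\top)x}\,dx$ over a region $R$ cut out by the sign conditions together with one linear inequality coming from the constraint $\sum x_i=0$. That inequality, together with the fact that only $k$ of the coordinates may be positive, forces the $k$ distinguished coordinates to be of order $\sqrt n$ -- with $s:=\sum_{i\le k}x_i$ concentrating near $\sqrt{kn}$ -- while the remaining $n-k-1$ coordinates are of order $1/\sqrt n$ and carry a total squared norm of order $k$. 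Laplace's method localizes the integral at this corner: the $n-k-1$ near-zero coordinates contribute a simplex-volume factor $s^{\,n-k-1}/(n-k-1)!$, their Gaussian weight contributes a factor $e^{-k/2}$, the transverse fluctuations of the $k$ large coordinates contribute $(2\pi)^{(k-1)/2}/\sqrt k$, and the Gaussian integral in $s$ contributes $\tfrac12(2k)^{(n-k)/2}\Gamma\big(\tfrac{n-k}2\big)$. Multiplying these, applying Stirling to $\binom nk$ and to $\Gamma\big(\tfrac{n-k}2\big)/(n-k-1)!$, and simplifying, reproduces exactly the stated square-root expression for $P(n,k)$.

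I expect this last asymptotic to be the real work: the heuristic above pins the answer, but making the Laplace analysis rigorous -- controlling the error terms uniformly in $n$, justifying that the corner where the $k$ distinguished coordinates are macroscopic genuinely dominates (any configuration with some other coordinate of order $\sqrt n$ or larger is much smaller), and nailing all constants and polynomial factors -- takes care. Among the exact values, the one genuinely non-routine ingredient is the spherical $3$-polytope volume behind $P(5,2)$: spherical polytope volumes in dimension $\ge3$ have no elementary closed form in general, so the argument must lean on the Schläfli formula and the full symmetry of $K_{5,2}$ to force that volume to collapse to a single $\arcsin\tfrac14$.
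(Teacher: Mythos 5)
First, a point of calibration: the paper does not prove this theorem. It is quoted from David's 1963 paper and used purely as a black box, to be combined with Theorem \ref{t.equiv} to extract the Gaussian Sylvester probabilities. So there is no internal proof to compare against; your attempt has to stand on its own, measured against David's original argument.

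Your reduction is sound and is essentially the classical route: writing $W_i=Z_i-\bar Z$, the event is that exactly $k$ of the $W_i$ are positive, so $P(n,k)=\binom{n}{k}\sigma_{n,k}$ with $\sigma_{n,k}$ the orthant probability of the degenerate equicorrelated Gaussian (correlation $-\tfrac{1}{n-1}$), i.e.\ the normalized volume of the spherical polytope $K_{n,k}\cap S^{n-2}$. Your treatment of $P(4,2)$ is complete and correct: the four extreme rays and the vanishing of the faces $\{x_1=x_2=0\}$, $\{x_3=x_4=0\}$ are right, each interior angle of the spherical quadrilateral is $\pi-\arccos\tfrac13$, and Girard gives $\sigma_{4,2}=\tfrac12-\tfrac1\pi\arccos\tfrac13$, hence $P(4,2)=\tfrac6\pi\arcsin\tfrac13$. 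But for the other two formulas what you have written is a plan, not a proof, and you say as much. For $P(5,2)$ the entire content is the evaluation of a $3$-dimensional spherical polytope volume; you assert that the Schl\"afli differential formula plus the $\Sigma_2\times\Sigma_3$ symmetry collapses it to $\tfrac1{40}+\tfrac1{4\pi}\arcsin\tfrac14$, but the actual integration of the edge-length times dihedral-angle differentials along a deformation path is not carried out, and nothing written rules out additional terms that merely happen to cancel. For the asymptotic $P(n,k)$, the Laplace heuristic plausibly identifies the dominant regime ($k$ coordinates of order $\sqrt n$, the remaining $n-k-1$ of order $n^{-1/2}$), but none of the quoted factors ($e^{-k/2}$, $(2\pi)^{(k-1)/2}/\sqrt k$, the $s$-integral) is derived, the domination of that corner over all competing regimes is not established, and there is no error control uniform in $n$. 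As it stands the proposal proves one of the three formulas and sketches the other two; the honest course in the context of this paper is to do what the authors do and cite David.
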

\noindent Here $a_n\sim b_n$ indicates that 
$\lim\limits_{n\to \infty} \frac {a_n}{b_n}=1.$
\smallskip

Our main result is an exact equivalence between the distributions $X_d$ and $P(n,k)$, showing that the relationship between \eqref{P42} and \eqref{X2} is not coincidental:
\begin{theorem}\label{t.equiv}
    For any $d$ and $k \leq \lfloor \frac{d + 1}{2}\rfloor$ other than $k = \frac{d + 1}{2}$, 
    \[\Pr(X_d = k) = 2P(d + 2,k).\]
    If $k = \frac{d + 1}{2}$ then
    \[\Pr(X_d = k) = P(d + 2, k).\]
\end{theorem}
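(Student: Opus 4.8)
The plan is to run everything through Gale duality. Recall that for $d+2$ points $p_1,\dots,p_{d+2}\in\R^d$ in general position, the affine relations $\sum_i g_i p_i=0$, $\sum_i g_i=0$ form a one-dimensional space; a spanning vector $g=(g_1,\dots,g_{d+2})$ is a \emph{Gale dual}. Splitting the relation $\sum_i g_i p_i=0$ according to the sign of $g_i$ and dividing each side by the common mass $\sum_{g_i>0}g_i=\sum_{g_i<0}(-g_i)$ produces a point of $\mathrm{conv}\{p_i:g_i>0\}\cap\mathrm{conv}\{p_i:g_i<0\}$, so by the uniqueness clause of Theorem~\ref{t.radon} these two sets are the Radon partition. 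For Gaussian points one a.s.\ has general position and no vanishing coordinate $g_i$, so
\[
X_d=\min\bigl(\#\{i:g_i>0\},\ \#\{i:g_i<0\}\bigr),\qquad \#\{g_i>0\}+\#\{g_i<0\}=d+2.
\]

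The heart of the argument is identifying the law of $g$ when $p_1,\dots,p_{d+2}$ are i.i.d.\ standard Gaussians; this is the paper's advertised main tool. Write $H=\{x\in\R^{d+2}:\sum_i x_i=0\}$. Homogenizing the points into a $(d+1)\times(d+2)$ matrix, its row space is the span of the all-ones vector together with the $d$ ``coordinate vectors'' $c_j=\bigl((p_1)_j,\dots,(p_{d+2})_j\bigr)$, and the space of Gale duals is the orthogonal complement of that row space. The $c_j$ are i.i.d.\ standard Gaussian vectors in $\R^{d+2}$; projecting them into $H$ gives $d$ i.i.d.\ standard Gaussians on $H$, whose span is a uniformly random $d$-dimensional subspace of $H$ by rotational invariance of the Gaussian, so the \emph{Gale line} $\mathrm{span}(g)$ is a uniformly random line in $H$. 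On the other hand, the vector $W=(Z_1-\bar Z,\dots,Z_{d+2}-\bar Z)$ with $Z_i$ i.i.d.\ $N(0,1)$ and $\bar Z$ their mean is itself a standard Gaussian on $H$, so $\mathrm{span}(W)$ is also a uniformly random line in $H$. Since the unordered sign vector of $g$ depends only on $\mathrm{span}(g)$, we get that $\#\{i:g_i>0\}$ and $\#\{i:W_i>0\}=\#\{i:Z_i>\bar Z\}$ have the same distribution, and by David's definition the latter equals $k$ with probability exactly $P(d+2,k)$.

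It remains to unwind the minimum. With $n=d+2$, the event $\{X_d=k\}$ is the disjoint union $\{\#\{g_i>0\}=k\}\sqcup\{\#\{g_i>0\}=n-k\}$, hence $\Pr(X_d=k)=P(n,k)+P(n,n-k)$. The reflection $Z\mapsto-Z$ fixes the Gaussian law and sends $\#\{Z_i>\bar Z\}$ to $n-\#\{Z_i>\bar Z\}$, so $P(n,k)=P(n,n-k)$. Thus whenever $k\ne n-k$ --- equivalently $k<\tfrac{d+2}{2}$, i.e.\ $k\le\lfloor\tfrac{d+1}{2}\rfloor$ --- the two pieces are distinct and equiprobable, giving $\Pr(X_d=k)=2P(d+2,k)$; this covers every value of $X_d$ strictly below the balanced split. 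In the remaining case $k=n-k=\tfrac{d+2}{2}$ (which forces $d$ even) the two pieces coincide and $\Pr(X_d=k)=P(d+2,k)$, while for odd $d$ there is no balanced split and the doubled identity accounts for the whole range of $X_d$.

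I expect the only real obstacle to be making the distributional step airtight: one must check that the correspondence ``Gaussian configuration $\mapsto$ Gale line'' pushes the Gaussian measure forward to the uniform measure on lines in $H$ --- the rotational-invariance argument does this, but it requires knowing that $d$ i.i.d.\ Gaussians span a uniformly random $d$-plane and that projecting into $H$ does not disturb uniformity --- and that the exceptional configurations (points not in general position, a Gale coordinate equal to $0$, ties among the $Z_i-\bar Z$) all have probability $0$, so that $X_d$ is a genuine function of the sign vector and the identities above hold exactly. Everything after that is the elementary bookkeeping with $\min$ and the reflection symmetry above.
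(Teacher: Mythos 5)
Your argument is correct and follows the same route as the paper: pass to the Gale dual, identify its law as that of i.i.d.\ Gaussians recentered at their sample mean, and read off the Radon split from the signs of the dual coordinates. The paper formalizes the distributional step as an explicit coupling lemma (Lemma~\ref{l.galegauss}, proved by applying a Haar-random orthogonal map fixing $\mathbf{1}$ to an independent Gaussian sample), valid for $n$ points in $\R^d$ with dual in $\R^{n-d-1}$; your direct argument via uniformly random lines in $H=\{x:\sum_i x_i=0\}$ is precisely the ``geometric intuition'' the paper itself sketches for that lemma, and in the relevant case $n=d+2$ it is a complete substitute. One small wording issue: $\#\{i:g_i>0\}$ is not determined by $\mathrm{span}(g)$ alone (it flips to $n-k$ under $g\mapsto -g$), so the transfer of distributions should be phrased for the unordered pair $\bigl\{\#\{g_i>0\},\#\{g_i<0\}\bigr\}$, i.e.\ for $X_d=\min(\cdot,\cdot)$ directly; since your final bookkeeping only uses the minimum, nothing breaks.

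The substantive point is your case analysis at the end, which disagrees with the theorem as stated: you place the exceptional (undoubled) case at the balanced split $k=n-k=\frac{d+2}{2}$, whereas the theorem and the paper's proof place it at $k=\frac{d+1}{2}$. You are right, and the stated theorem appears to contain an off-by-one error: the events $\{|A|=k\}$ and $\{|A|=d+2-k\}$ coincide only when $k=\frac{d+2}{2}$. Indeed, for $d=3$ and $k=2=\frac{d+1}{2}$ the paper's own corollary gives $\Pr(X_3=2)=\frac{1}{2}+\frac{5}{\pi}\arcsin\left(\frac{1}{4}\right)=2P(5,2)\approx .902$, consistent with your version and with the empirical table, but not with the displayed exceptional case $\Pr(X_d=k)=P(d+2,k)$. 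So your proof establishes the corrected statement rather than the literal one; the discrepancy is in the source, not in your argument.
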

\noindent Note that $2P(d+2,k)=P(d + 2, k) +P(d + 2, d + 2 - k)$, so this is why the $(d + 1)/2$ case does not have this factor of 2.

Theorem \ref{t.equiv} allows us to immediately translate results for Youden's demon problem to Sylvester's problem for Gaussian points.  Apart from giving Maehara's and Blatter's result for the probability that four Gaussian points in $\R^2$ are in convex position, we obtain new results on the Gaussian Sylvester problem as follows:
\begin{corollary}
For five points drawn independently from the standard normal distribution in~$\R^3$, the probability that the points are in convex position is given by
    \begin{equation}
    \Pr(X_3=2)=\frac 1 2 + \frac 5 \pi \arcsin\left(\frac 1 4\right)\approx .902.
    \end{equation}
\end{corollary}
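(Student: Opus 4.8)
The plan is to derive this directly from Theorem~\ref{t.equiv} and David's evaluation~\eqref{P52}, after rephrasing ``convex position'' in terms of the Radon partition.

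First I would establish the geometric reduction: five points in general position in $\R^3$ are in convex position exactly when $X_3 = 2$. A finite point set fails to be in convex position precisely when one point lies in the convex hull of the remaining points, i.e.\ when a singleton together with its complement forms a Radon partition. By Theorem~\ref{t.radon}, for points in general position the Radon partition is unique, so this happens exactly when that unique partition is the $(1,4)$-split; in the only other case the partition is a $(2,3)$-split and all five points are vertices of their convex hull. Since five i.i.d.\ standard Gaussian points in $\R^3$ are almost surely in general position, the probability they are in convex position is $\Pr(X_3 = 2)$.

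Next I would apply Theorem~\ref{t.equiv} with $d = 3$ and $k = 2$. Five points admit no perfectly balanced Radon split, so the event $X_3 = 2$ corresponds to two distinct sign patterns of the Gale dual --- equivalently, to the sample mean occupying either of two consecutive ranks among the order statistics --- which is the source of the factor of $2$: $\Pr(X_3 = 2) = 2P(5,2) = P(5,2) + P(5,3)$. Substituting $P(5,2) = \tfrac14 + \tfrac{5}{2\pi}\arcsin(\tfrac14)$ from~\eqref{P52} gives
\[
\Pr(X_3 = 2) \;=\; 2\left(\frac14 + \frac{5}{2\pi}\arcsin\!\left(\frac14\right)\right) \;=\; \frac12 + \frac{5}{\pi}\arcsin\!\left(\frac14\right) \;\approx\; .902 .
\]

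I do not expect a real obstacle: once Theorem~\ref{t.equiv} is in hand this is essentially a one-line deduction. The only points demanding a little care are the clean chain of equivalences ``convex position $\Leftrightarrow$ $(2,3)$-Radon split $\Leftrightarrow X_3 = 2$'', which rests on uniqueness of the Radon partition for points in general position, and correctly carrying the factor of $2$ between $\Pr(X_3 = 2)$ and $P(5,2)$. As a consistency check, $\Pr(X_3 = 1) + \Pr(X_3 = 2) = 2P(5,1) + 2P(5,2) = \sum_{j=1}^{4} P(5,j) = 1$, matching the fact that the mean of five Gaussians almost surely separates their order statistics at exactly one position.
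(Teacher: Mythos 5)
Your deduction is correct and is exactly the paper's intended route: the corollary is presented as an immediate consequence of Theorem~\ref{t.equiv} together with David's evaluation~\eqref{P52}, and your reduction of ``convex position'' to the event $X_3=2$ via uniqueness of the Radon partition (Theorem~\ref{t.radon}) is the right justification for the step the paper leaves implicit. One point is worth flagging, and it is to your credit: a \emph{verbatim} application of Theorem~\ref{t.equiv} with $d=3$, $k=2$ lands in the stated exceptional case $k=\tfrac{d+1}{2}$, which would give $\Pr(X_3=2)=P(5,2)$ without the factor of~$2$ and contradict the corollary (and the empirical table). Your first-principles criterion is the correct one: the factor of~$2$ is lost only when the events $|A|=k$ and $|A|=d+2-k$ coincide, i.e.\ only for a perfectly balanced split $k=\tfrac{d+2}{2}$, which is impossible for five points; the exceptional case in the theorem statement should read $k=\tfrac{d+2}{2}$, consistent with the $d=2$, $k=2$ instance in the abstract where no factor of~$2$ appears. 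So your argument is right, and in fact more careful than a literal invocation of the theorem would have been.
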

\begin{corollary}
For $d+2$ points drawn independently from the standard normal distribution in~$\R^d$, the probability that the points are in convex position, asymptotically in~$d$, satisfies
    \begin{equation}
    1-\Pr(X_d\neq 1)\sim 2\left(\frac{e^{d}}{2 (d+2)^{(d - 2)}(2\pi)^{d+1}}\right)^{1/2}.
    \end{equation}
%    \begin{equation}
%    \Pr(X_d\neq 1)\sim 1-2\left(\frac{e^{n-2}}{2 n^{n-4}(2\pi)^{n-1}}\right)^{1/2}
%    \end{equation}
\end{corollary}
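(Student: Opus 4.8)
The plan is to derive this corollary directly from Theorem~\ref{t.equiv} and David's asymptotic~\eqref{Pnk}; the content is entirely in matching up the combinatorial parameters, with essentially no new analysis required.

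First I would record the elementary fact that $d+2$ points in general position in $\R^d$ are in convex position if and only if no one of them lies in the convex hull of the other $d+1$; by Radon's theorem (Theorem~\ref{t.radon}) this is precisely the statement that the unique Radon partition is not a $(1,d+1)$ split, i.e.\ that $X_d \neq 1$. Consequently $\Pr(X_d\neq 1)$ is exactly the probability of convex position, and $1-\Pr(X_d\neq 1)=\Pr(X_d=1)$.

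Next I would invoke Theorem~\ref{t.equiv} with $k=1$. For every $d\geq 2$ we have $1\leq\lfloor (d+1)/2\rfloor$ and $1\neq (d+1)/2$, so we are in the generic case of that theorem and obtain $\Pr(X_d=1)=2P(d+2,1)$.

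Finally I would substitute $n=d+2$ and $k=1$ into~\eqref{Pnk}. Since $k=1$ is held fixed while $n=d+2\to\infty$ as $d\to\infty$, the asymptotic applies, and the relevant factors simplify to $k^{n-k-1}=1$, $(k!)^2=1$, $e^{n-2k}=e^{d}$, $n^{n-3k-1}=(d+2)^{d-2}$ and $(2\pi)^{n-k}=(2\pi)^{d+1}$, so that
\[
P(d+2,1)\sim\left(\frac{e^{d}}{2(d+2)^{d-2}(2\pi)^{d+1}}\right)^{1/2}.
\]
Multiplying by the factor $2$ coming from the previous step yields the asserted formula. The only point requiring any care — and it is a trivial one — is that we must stay away from the boundary case $k=(d+1)/2$ of Theorem~\ref{t.equiv}; since we only ever use $k=1$ and let $d\to\infty$, the generic clause of that theorem applies for all large $d$, so there is no obstacle of substance here.
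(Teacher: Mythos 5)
Your proposal is correct and is precisely the argument the paper intends (the paper states this corollary as an immediate consequence of Theorem~\ref{t.equiv} with $k=1$ and David's asymptotic~\eqref{Pnk}, without writing out the substitution). The identification of convex position with $X_d\neq 1$, the application of the generic case of Theorem~\ref{t.equiv}, and the parameter substitutions $n=d+2$, $k=1$ are all exactly as needed.
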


We prove Theorem \ref{t.equiv} via Gale duality.  The \emph{Gale dual} of a sequence $\{x_i\}_{i=1}^n$ of $n$ points in $\R^d$ is a sequence $\{y_i\}_{i=1^n}$ of $n$ points in $\R^{n-(d+1)}$ (unique up to linear transformations) with the property that Radon partitions of the points $\{x_i\}$ correspond to \textbf{linear} separations of the points $\{y_i\}$, i.e. ways to split the points into two sets that are separated by a hyperplane that passes through the origin.  In general, a point $y_j$ in the dual depends not just on $x_j$ but on the whole sequence $x_1,\dots,x_n$.  But the key observation that gives rise to Theorem \ref{t.equiv} is that when the $\{x_i\}$ is a sequence of independent Gaussian points, the $\{y_i\}$'s can also be taken to be a sequence of independent Gaussians, conditioned just to have barycenter~$0$ (Lemma \ref{l.galegauss}).

This fact may be of more general interest; for example, Schneider \cite{Schneider} observed that neighborly polytopes can be generated not only by choosing their vertices as independent Gaussians, but also by choosing their Gale duals as independent samples from any even distribution $\phi$ assigning measure 0 to every hyperplane; when $\phi$ is a Gaussian distribution, our result gives a simple explanation for this phenomenon.  And in~\cite{Kuchelmeister}, Kuchelmeister connects linear separability for Gaussian points to Youden's demon for Gaussian points just by virtue of the commonalities in the calculations that determine each (in particular, computing volumes of spherical simplices); Lemma~\ref{l.galegauss} makes this connection clear in our particular case when we are comparing $X_d$ and~$P(n, k)$.

%Directly from work on Youden's Demon problem by David \cite{David}, Theorem \ref{t.equiv}  implies Maehara's result on $X_2$ and also settles the $d = 3$ case:
%\begin{corollary}
%\[\Pr(X_3 = 2) = \frac{1}{2} + \frac{5}{\pi} \arcsin \left(\frac{1}{4} \right) \approx .902.\]   \end{corollary}

%Youden's Demon is the problem of determining the position of the sample mean among order statistics sampled from some distribution. In our case, the distribution will be the standard Gaussian. For $Z_1, Z_2, ..., Z_n$ independent samples from $N(0, 1)$ with sample mean $\overline{Z} = \frac{Z_1 + \cdots + Z_n}{n}$. Following notation of David, let $P(n, k)$ be the probability that $\overline{Z}$ is between the $k$th smallest and the $(k + 1)$st smallest $Z_i$. Youden's Demon in this case is the following question:
%\begin{question}
%    What is the distribution of $P(n, k)$?
%\end{question}
%\commAN{Add some background on Youden's Demon.}
%Our main theorem is the following:

\section{Gale duality}
We begin with a proof of Radon's Theorem. This proof is simple, but worth including here as it sets up the key ideas from linear algebra, including Gale duality, that we will be using later.
\begin{proof}[Proof of Radon's Theorem]
Let $x_1, ..., x_{d + 2}$ be points in $\R^d$. Construct a $(d + 1) \times (d + 2)$ matrix~$M$ by taking initially the columns to be the $x_i$'s and then adding an additional row that contains all~$1$'s. Take $v$ a vector in the right kernel of~$M$. Partition the points into 
$A = \{x_i \mid v_i \geq 0\}$
and
$B = \{x_i \mid v_i < 0\}$
Then, because of the all $1$'s row, we have
\[\sum_{i \in A} v_i = \sum_{j \in B} -v_j\]
and so taking, 
\[\lambda_i = \frac{|v_i|}{\sum_{i \in A} v_i}\]
we have 
$\sum_{i \in A} \lambda_i = 1$ and $\sum_{h \in B} \lambda_j = 1$ with
$\sum_{i \in A} \lambda_i x_i = \sum_{j \in B} \lambda_j x_j$, so the convex hull of $A$ and $B$ intersect nontrivially. Moreover, if the points are in general position then $\text{null}(M) = 1$ and the sign pattern of $v$ is unique up to swapping to $-v$ with all coordinates of $v$ nonzero, thus the Radon partition is unique.
\end{proof}
From this proof we see that basic linear algebra gives us a geometric description of the full Radon partitions of an arbitrary number of points in $\R^d$. Given $n$ points in $\R^d$ in general position take $M$ to be the $(d + 1) \times n$ matrix $M$ obtained by taking the points as the columns vectors and then adding a row of all~$1$'s. The Radon partitions of the points are exactly the sign-patterns corresponding to the orthants of $\R^n$ intersected by right kernel of $M$. Take $N$ to be any matrix whose columns are a basis for $\ker(M)$. The \textbf{Gale dual} of the original point set are the points given by the \textbf{rows} of $N$. If we start with $n$ points in general position in $\R^d$, then the Gale dual is a set of $n$ points in $\R^{n - d - 1}$ with barycenter zero. Note that the Gale dual is unique only up to the choice of basis for $\ker(M)$. For more details on Gale duality we refer the reader to Matou\v sek's book for details~{\cite[Sec.~5.6]{matousek2013}}. The most useful fact about the Gale dual for us is that if $x_1, .., x_n$ are in general position in $\R^d$ and have Gale dual $y_1, ..., y_n$ in $\R^{n - d - 1}$ then the Radon partitions of the $x_i$'s correspond exactly to the linear separations of the $y_i$'s. Specifically, for $A \subseteq [n]$, $\{x_i \mid i \in A\} \sqcup \{x_j \mid i \in [n] \setminus A\}$ is a Radon partition if and only if $\{y_i \mid i \in A\} \sqcup \{y_j \mid j \in [n] \setminus A\}$ is a linear separation. 

It follows that for $n$ points in general position in $\R^d$ the number of Radon partitions is equal to the number of linear separations of $n$ points in general position in $\R^{n- d - 1}$. This turns out to have a specific value that depends only on $n$ and $d$, but not on how the points are arranged, other than a general position assumption. This enumeration result is known as Cover's theorem~\cite{Cover} in the statistics literature, and is independently due to Eckhoff~\cite{Eckhoff}.
\begin{theorem}%[Eckhoff~\cite{Eckhoff}]
For $n$ points in general position in $\R^d$, the number of Radon partitions is 
\[\sum_{i = d + 1}^{n - 1} \binom{n - 1}{i},\]
the number of affine separations is
\[\sum_{i = 0}^{d} \binom{n - 1}{i},\]
and the number of linear separations is
\[\sum_{i = 0}^{d - 1} \binom{n - 1}{i}.\]
\end{theorem}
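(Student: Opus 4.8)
The plan is to deduce all three formulas from a single one: the number of \emph{linear separations} of a configuration of points in general position. For $n$ points $y_1,\dots,y_n$ in $\R^m$ with no $m$ of them linearly dependent, let $L(n,m)$ denote the number of unordered partitions $\{A,[n]\setminus A\}$ of $[n]$ for which some hyperplane through the origin has one part strictly on each side. The content of the theorem is the evaluation $L(n,m)=\sum_{i=0}^{m-1}\binom{n-1}{i}$; from this the affine-separation count comes by a lifting trick and the Radon-partition count by complementary counting, as explained below.

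\emph{Step 1 (linear separations).} I would identify $2L(n,m)$ with the number of full-dimensional regions of the central arrangement $\mathcal{A}=\{y_i^\perp : i\in[n]\}$ in $\R^m$. The open regions of $\mathcal{A}$ are exactly the maximal sets of vectors $w$ with a fixed sign vector $(\operatorname{sign}\langle w,y_i\rangle)_{i=1}^n$ (a generic $w$ has all entries nonzero, since general position forces no $y_i$ to be $0$), so they biject with the \emph{ordered} dichotomies realized by a homogeneous hyperplane; negating $w$ swaps the sides, and since $\mathcal{A}$ is centrally symmetric this partitions the regions into antipodal pairs, one per unordered linear separation. The classical region-count recursion then applies: adjoining $y_n^\perp$ to the arrangement of $y_1^\perp,\dots,y_{n-1}^\perp$, every old region either misses $y_n^\perp$ or is cut in two by it, so the number of regions grows by the number of regions of the arrangement induced by $y_1^\perp,\dots,y_{n-1}^\perp$ on the hyperplane $y_n^\perp\cong\R^{m-1}$, and general position is inherited by this induced arrangement. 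Writing $R(n,m)=2L(n,m)$ we get $R(n,m)=R(n-1,m)+R(n-1,m-1)$ with $R(1,m)=R(n,1)=2$; since $2\sum_{i=0}^{m-1}\binom{n-1}{i}$ satisfies the same recursion and boundary values by Pascal's rule, $R(n,m)=2\sum_{i=0}^{m-1}\binom{n-1}{i}$. Taking $m=d$ gives the stated linear-separation count.

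\emph{Steps 2 and 3 (affine separations and Radon partitions).} For the affine count, lift $x\mapsto\hat x=(x,1)\in\R^{d+1}$: an affine hyperplane $\{\langle a,x\rangle=b\}$ strictly separating a partition of the $x_i$ corresponds to the homogeneous hyperplane $(a,-b)^\perp$ strictly separating the $\hat x_i$; every linear separation of the $\hat x_i$ arises this way, the trivial one (matching the $i=0$ term) being realized by $(0,\dots,0,1)^\perp$; and affine general position of the $x_i$ is precisely linear general position of the $\hat x_i$, since $\hat x_{i_0},\dots,\hat x_{i_k}$ are linearly independent iff $x_{i_0},\dots,x_{i_k}$ are affinely independent. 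Hence the affine-separation count is $L(n,d+1)=\sum_{i=0}^{d}\binom{n-1}{i}$. For the Radon count, note that among the $2^{n-1}$ unordered partitions $\{A,[n]\setminus A\}$ of $[n]$, one with both sides nonempty is a Radon partition exactly when $\operatorname{conv}(A)\cap\operatorname{conv}([n]\setminus A)\neq\emptyset$, and otherwise the separating hyperplane theorem (disjoint compact convex sets separate strictly, hence through no point) makes it an affine separation, while the trivial partition is an affine separation. So Radon partitions and affine separations partition the $2^{n-1}$ possibilities, giving $2^{n-1}-\sum_{i=0}^{d}\binom{n-1}{i}=\sum_{i=d+1}^{n-1}\binom{n-1}{i}$. (Consistently, this also drops out of Step 1 via the Gale-duality correspondence recalled above, which identifies Radon partitions of $n$ points in $\R^d$ with linear separations of $n$ points in $\R^{n-d-1}$: indeed $L(n,n-d-1)=\sum_{i=0}^{n-d-2}\binom{n-1}{i}=\sum_{i=d+1}^{n-1}\binom{n-1}{i}$ by $\binom{n-1}{i}=\binom{n-1}{n-1-i}$.)

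The only real obstacle is Step 1, together with the bookkeeping it hides: checking that the open regions of $\mathcal{A}$ really do biject with ordered linear separations — this uses that general position forces no $y_i$ to be $0$, so a generic normal vector lies off every $y_i^\perp$ and the open regions fill a full-measure set, split by central symmetry into antipodal pairs — and checking that general position is inherited when the arrangement is restricted to $y_n^\perp$, so that the recursion runs with the same hypothesis one dimension lower. Granting these, Steps 2 and 3 are routine translations.
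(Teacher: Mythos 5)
Your argument is correct, but note that the paper does not actually prove this theorem: it states it and attributes it to Cover and to Eckhoff, so there is no internal proof to compare against. What you have supplied is essentially the classical argument behind those citations. Step 1 is Schl\"afli/Cover region counting for a central arrangement in general position: the identification of antipodal pairs of open regions with unordered strict linear separations is right, the deletion--restriction recursion $R(n,m)=R(n-1,m)+R(n-1,m-1)$ with boundary values $R(1,m)=R(n,1)=2$ is the standard one, and the two points you flag as needing verification do check out --- a dependence $\sum_j c_j\,\pi(y_{i_j})=0$ among the projections onto $y_n^\perp$ would give $\sum_j c_j y_{i_j}\in\operatorname{span}(y_n)$, i.e.\ a dependence among $m$ of the original vectors, so general position is inherited by the restricted arrangement. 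Steps 2 and 3 (the lift $x\mapsto(x,1)$ and complementation against $2^{n-1}$ via the separating hyperplane theorem, with the trivial partition accounted for by the $i=0$ term) are the standard reductions and are handled correctly, including the bookkeeping point that ``general position'' means affine general position for the Radon/affine counts but linear general position for the linear-separation count --- your lift converts one into the other, which is slightly more careful than the paper's single blanket hypothesis. The Gale-duality cross-check at the end is a nice consistency test and matches the way the paper actually uses the theorem.
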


The key lemma we need to prove our main result is that the Gale dual of Gaussian points in $\R^d$ can be taken to be independent Gaussian points in $\R^{n - d - 1}$ conditioned to have barycenter zero. With this lemma our main theorem follows almost immediately. Indeed $d + 2$ Gaussian points in $\R^d$ are Gale dual to $d + 2$ points on the real line which are Gaussian distributed conditioned on having barycenter zero. The Radon partition is determined by which points in the Gale dual are negative and which are positive. This means that $X_d$ is the minimum between the number of positive points in the Gale dual and the number of negative points in the Gale dual. This is the same as Youden's demon; determining the position of the mean among the order statistics. 

In Section \ref{GaussianDual} we provide the full details for this key lemma about the Gale dual of Gaussian points, but here we explain the geometric intuition. A $k$-dimensional linear subspace $M$ of $\R^n$ partitions the orthants of $\R^n$ into orthants intersected by $M$ and orthants intersected by the orthogonal complement of $M$. If we take a collection of $n$ points $v_1, ..., v_n$ in general position in $\R^d$ then the Radon partitions of the points correspond exactly to such a collection of orthants in $\R^n$. Indeed if we take a the matrix $A$ to be the $(d + 1) \times n$ matrix obtained by adding the all ones vector to the $d \times n$ matrix whose columns are $v_1, ..., v_n$ then the orthants spanned by $A^{T}$ correspond exactly to the affine splits of the $v_i$'s, and the orthants not spanned by $A^{T}$, and therefore necessarily spanned by the orthogonal complement, correspond to the Radon partitions. 

The Radon partitions of a set of $n$ points in $\R^d$ are then a property of a naturally associated linear subspace of $\R^n$. A natural way to sample Radon partitions for a random set of $n$ points in $\R^d$ would be to pick a uniform random linear subspace of $\R^n$ of dimension $d$ and then making it $d + 1$ dimensional by adding the all ones vector. When we pick the uniform random subspace, we are sampling with respect to the Haar measure on the real Grassmannian $\Gr_{d}(n)$. It is well known that the column span of an $n \times d$ matrix whose entries are iid standard Gaussians is a uniform random sample from $\Gr_{d}(n)$. Taking the orthogonal complement of a uniform random point in $\Gr_d(n)$ ought to be a uniform random point in $\Gr_{n - d}(n)$ which can itself be sampled as the columns span of an $n \times (n - d)$ matrix of iid standard Gaussians. Thus the orthogonal complement of a Gaussian matrix is also a Gaussian matrix. In the case of Radon partitions all that changes is that we add the all ones vector to the original space, and so this is why we should expect that the Gale dual of $n$ Gaussian points in $\R^d$ is $n$ points in $\R^{n - d  - 1}$ conditioned to have barycenter zero.

\section{Gale dual for independent Gaussians} \label{GaussianDual}
The Gale dual of $n$ points in $\R^d$ is $n$ points in $\R^{n - d - 1}$ that have barycenter zero. For our proof here we take the additional assumption that the original points also have barycenter zero. As this can be accomplished by a simple translation of the points, this will not affect the Radon partitions. 

Letting $I_n$ and $J_n$ denote the $n\times n$ identity and $n\times n$ all-ones matrix, respectively, we define:
\begin{definition}
Define $\cpoints_n^d$ to be a multivariate Gaussian distribution generated as  $\Z_{d\times n}A$, where $A=(I_n-\frac 1 n J_n)$. 
\end{definition}
\noindent The columns of a matrix from distribution $\cpoints_n^d$ are then in barycentric position.  Note that $\cpoints_n^d$ can be sampled by generating the the columns of $\Z_{d\times n}$ as independent standard Gaussians in $\R^d$, and then subtracting the barycenter of the collection of $n$ points from each.  The point of this section is to prove the following lemma:
\begin{lemma}\label{l.galegauss}
 There is a coupling $(\{x_i\}_{i=1}^n,\{y_i\}_{i=1}^n)$ of $\cpoints_n^d, \cpoints_n^{n-d-1}$ such that $\{x_i\}_{i=1}^n$ and $\{y_i\}_{i=1}^n$ are in Gale dual position.
\end{lemma}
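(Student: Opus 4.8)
The plan is to translate the statement into a fact about Gaussian matrices — namely that two standard Gaussian matrices of complementary shape can be coupled so that their row spaces are orthogonal complements — and then to observe that this orthogonality is exactly the Gale duality relation once the barycenter condition is written in coordinates.

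First I would strip the barycenter constraint off both ensembles. Since $A = I_n-\tfrac1n J_n$ is the orthogonal projection of $\R^n$ onto $\mathbf 1^\perp$, fix an $n\times(n-1)$ matrix $U$ with orthonormal columns spanning $\mathbf 1^\perp$, so that $A = UU^T$ and $U^TU = I_{n-1}$. Writing $Z' := \Z_{d\times n}U$, the matrix $Z'$ is a $d\times(n-1)$ standard Gaussian, and a sample of $\cpoints_n^d$ has columns $x_i = \Z_{d\times n}Ae_i = Z'r_i$, where $r_i\in\R^{n-1}$ is the $i$-th row of $U$; symmetrically, for \emph{any} $(n-d-1)\times(n-1)$ standard Gaussian $W'$, the points $y_i := W'r_i$ form a sample of $\cpoints_n^{n-d-1}$. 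So it suffices to couple such $Z'$ and $W'$ so that $\{Z'r_i\}_i$ and $\{W'r_i\}_i$ are in Gale dual position. Unwinding the definition of the Gale dual — the columns of the matrix with rows $y_i^T$ should form a basis of the kernel of the $(d+1)\times n$ matrix $M$ obtained by appending the row $\mathbf 1^T$ to the matrix with columns $x_i$ — and using that $\operatorname{rowspace}(M) = U(\operatorname{rowspace}Z')\oplus\operatorname{span}(\mathbf 1)$ is an orthogonal sum, one finds after transporting through $U$ that the Gale dual condition is \emph{exactly} $\operatorname{rowspace}(W') = (\operatorname{rowspace}Z')^\perp$ inside $\R^{n-1}$. (This is valid on the almost-sure event that $Z'$ and $W'$ have full rank, which also puts both configurations in general position, so ``Gale dual'' is unambiguous.)

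It remains to couple a $d\times m$ standard Gaussian $Z'$ and a $k\times m$ standard Gaussian $W'$ (with $m=n-1$, $k=m-d$) so that $\operatorname{rowspace}(W') = (\operatorname{rowspace}Z')^\perp$ almost surely. Here I would invoke the classical polar-type decomposition of a rectangular Gaussian: a $d\times m$ standard Gaussian factors almost surely as $Z' = R\,\Theta$, where $R = (Z'Z'^T)^{1/2}$ is symmetric positive definite $d\times d$ and $\Theta = R^{-1}Z'$ has orthonormal rows, and moreover $R$ and $\Theta$ are independent with $\Theta$ distributed according to the unique $O(m)$-invariant law on the Stiefel manifold of $d\times m$ frames; the independence and invariance follow from the two-sided orthogonal invariance of the Gaussian together with uniqueness of invariant measure on a compact homogeneous space, and conversely $(R,\Theta)\mapsto R\Theta$ applied to an independent pair with these marginals reproduces a standard Gaussian. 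Now let $Q$ be Haar-distributed on $O(m)$: its first $d$ rows give an invariant frame $\Theta_1$, its last $k$ rows an invariant frame $\Theta_2$, and since the rows of $Q$ form an orthonormal basis we have $\operatorname{rowspace}(\Theta_2) = \operatorname{rowspace}(\Theta_1)^\perp$. Taking $R,S$ with the appropriate scale-factor laws, with $R,S,Q$ mutually independent, and setting $Z' := R\Theta_1$ and $W' := S\Theta_2$ yields standard Gaussians of the required sizes whose row spaces are orthogonal complements; setting $x_i := Z'r_i$ and $y_i := W'r_i$ then completes the coupling.

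The routine parts are the coordinate reduction through $U$ and the dictionary between Gale duality and orthogonality of row spaces. The one substantive point — where I expect the real work to lie — is the probabilistic fact that a single Haar orthogonal matrix furnishes, in its top and bottom row blocks, a random $d$-frame and a random $k$-frame with orthogonal spans, and that dressing each with an independent positive-definite scale factor recovers honest Gaussian matrices; equivalently, that the conditional law of the ``direction'' of a Gaussian matrix given its ``scale'' is the invariant measure on the Stiefel manifold. A different way to organize the same content, avoiding the polar decomposition, is to first draw a Haar-random $d$-plane $V\subseteq\R^m$ and then, conditionally on $V$, draw $Z'$ and $W'$ independently from the regular conditional distributions of standard Gaussians given $\{\operatorname{rowspace}=V\}$ and $\{\operatorname{rowspace}=V^\perp\}$ respectively; since orthocomplementation carries the invariant measure on $\Gr_d(\R^m)$ to that on $\Gr_k(\R^m)$, both marginals come out standard Gaussian and $\operatorname{rowspace}(W')=V^\perp=(\operatorname{rowspace}Z')^\perp$ by construction. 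Either way, the crux is this orthocomplement-invariance of the Gaussian ensemble; everything else is bookkeeping.
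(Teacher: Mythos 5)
Your proof is correct, and it reaches the coupling by a genuinely different construction from the paper's. The paper stays in $\R^n$ throughout: it transposes the statement into Lemma~\ref{l.cplanecoupling}, samples the two matrices \emph{independently} from $\cplane_d^n$ and $\cplane_{n-d-1}^n$, and then applies to the second one a Haar-random orthogonal map of $\R^n$ that fixes $\mathbf{1}$ and carries the orthocomplement of its column span onto the span of the first matrix's columns together with $\mathbf{1}$; the law is preserved because, once one integrates over the uniformly random target subspace, this random map is Haar on the full stabilizer of $\mathbf{1}$ (Observation~\ref{symmorth1}). You instead pass explicitly to $\R^{n-1}$ via the isometry $U$ --- which has the virtue of making the dictionary between Gale duality and the condition $Z'W'^T=0$ (plus full rank) completely concrete --- and then build the coupling from the polar decomposition of a rectangular Gaussian, harvesting both orthonormal frames from complementary row blocks of a single Haar matrix in $O(n-1)$. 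The probabilistic inputs are of comparable depth: you need the independence of the radial and angular parts of a Gaussian matrix together with the uniformity of the row blocks of a Haar orthogonal matrix, while the paper needs the disintegration of Haar measure on the stabilizer of $\mathbf{1}$ over the choice of image subspace. What your route buys is an explicit, coordinate-level verification of the Gale condition and a proof that turns the paper's heuristic at the end of Section~2 (orthocomplementation preserves the invariant measure on the Grassmannian) into the literal mechanism of the argument; what the paper's route buys is brevity --- no polar decomposition, no choice of basis for $\mathbf{1}^\perp$, and only the single symmetry observation about $\cplane_n^d$.
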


Closely related to $\cpoints_n^d$ is the distribution on collections of Gaussian points normal to the all 1's vector $\mathbf{1}\in \R^d$:
\begin{definition}\label{d.cplane}
Define $\cplane_n^d$ to be a multivariate Gaussian distribution generated as $A\Z_{d\times n}$, where $A=(I_d-\frac 1 d J_d)$.
\end{definition}
In particular, $\cplane_n^d$ can be viewed as the distribution of $n$ \emph{independent} Gaussian samples in $\R^d$, each with covariance matrix $I_d-\frac 1 d J_d$ (and so each orthogonal to $\mathbf{1}\in \R^d$).  

\begin{observation}\label{symmorth1}
Viewed as a collection of $n$ points in $\R^d$, the distribution $\cplane_n^d$ is symmetric with respect to any orthogonal transformation fixing $\mathbf{1}$.\qed
\end{observation}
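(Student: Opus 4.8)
The plan is to work directly with the representation $\cplane_n^d = A\Z_{d\times n}$, where $A = I_d - \frac1d J_d$ is the orthogonal projection of $\R^d$ onto the hyperplane $\mathbf{1}^\perp$. First I would record the two elementary facts that $A$ is symmetric and idempotent ($A^\top = A$ and $A^2 = A$), so that a sample from $\cplane_n^d$ consists of $n$ independent Gaussian vectors in $\R^d$, each with mean $0$ and covariance $AA^\top = A$. If $Q$ is an orthogonal transformation of $\R^d$, then applying $Q$ to each of the $n$ points replaces $A\Z_{d\times n}$ by $QA\Z_{d\times n}$, whose columns are independent $N(0,\, QAQ^\top)$; so it suffices to show that $QAQ^\top = A$ whenever $Q\mathbf{1} = \mathbf{1}$.

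The key step is to note that $QAQ^\top$ is again an orthogonal projection, namely onto the subspace $Q(\mathbf{1}^\perp)$. Since $Q$ is orthogonal and fixes $\mathbf{1}$, it preserves $\mathbf{1}^\perp$, i.e. $Q(\mathbf{1}^\perp) = \mathbf{1}^\perp$, and hence $QAQ^\top$ is the orthogonal projection onto $\mathbf{1}^\perp$, which is exactly $A$. Equivalently, one can argue that $Q$ and $A$ commute because both respect the orthogonal decomposition $\R^d = \mathbb{R}\mathbf{1} \oplus \mathbf{1}^\perp$ --- $A$ acts as $0$ on the first summand and as the identity on the second, while $Q$ maps each summand to itself --- so $QA = AQ$, and therefore $QA\Z_{d\times n} = AQ\Z_{d\times n}$, which has the same law as $A\Z_{d\times n}$ because the columns of $\Z_{d\times n}$ are independent $N(0,I_d)$ and $Q$ orthogonal gives $QN(0,I_d) = N(0, QQ^\top) = N(0,I_d)$.

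There is no serious obstacle here (consistent with the fact that the statement is marked with \qed); the only point that needs a word of care is making precise that ``applying $Q$ to the collection of points'' corresponds to left multiplication of the data matrix by $Q$, and that the $n$ columns remain jointly --- not merely marginally --- distributed as before, which is immediate since $Q$ acts identically and independently on each column.
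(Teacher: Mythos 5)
Your proof is correct and is exactly the standard justification the paper has in mind: the paper states this observation with a \qed and no proof, treating it as immediate, and your argument (each column is $N(0,A)$ with $A=I_d-\frac1d J_d$ the orthogonal projection onto $\mathbf{1}^\perp$, and $QAQ^\top=A$ for orthogonal $Q$ fixing $\mathbf{1}$) is the natural way to fill in that omitted detail. All steps, including the remark that $Q$ acts independently and identically on the columns so the joint law is preserved, check out.
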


Lemma \ref{l.galegauss} is a consequence of the following:

\begin{lemma}\label{l.cplanecoupling}
    There is a coupling $(X,Y)$ of $\nu_n^d$ and $\nu_{d-n-1}^d$ so that the columns of $X$ are orthogonal to the columns of $Y$.
\end{lemma}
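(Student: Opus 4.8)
The plan is to remove the all-ones constraint, which reduces the lemma to a statement about plain standard Gaussian matrices, and then to produce the required orthogonality by conditioning a Gaussian matrix on its column span and passing to the orthogonal complement. For the reduction: since $I_n-\tfrac1nJ_n$ is the orthogonal projection of $\R^n$ onto $\mathbf 1^{\perp}$, I fix an isometry $B\colon\R^{n-1}\to\mathbf 1^{\perp}\subset\R^n$ with $BB^{\top}=I_n-\tfrac1nJ_n$. Applying $B$ to each column of a standard Gaussian $(n-1)\times m$ matrix produces exactly a sample from $\nu_m^n$, and whether a set of columns is pairwise orthogonal is unchanged by the isometry $B$. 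So it suffices to couple a standard Gaussian $(n-1)\times d$ matrix $\tilde X$ with a standard Gaussian $(n-1)\times(n-d-1)$ matrix $\tilde Y$ so that the column span of $\tilde X$ is orthogonal to the column span of $\tilde Y$; note $d+(n-d-1)=n-1=\dim\mathbf 1^{\perp}$, so these two spans will in fact be orthogonal complements in $\R^{n-1}$.

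For the construction I would use two standard facts: (i) the column span of a standard Gaussian $(n-1)\times k$ matrix is Haar-distributed on $\Gr_k(\R^{n-1})$ — the columns being almost surely linearly independent — and (ii) the map $W\mapsto W^{\perp}$ is a homeomorphism $\Gr_k(\R^{n-1})\to\Gr_{n-1-k}(\R^{n-1})$ that commutes with the $O(n-1)$-action, hence carries Haar measure to Haar measure. First sample $\tilde X$ and let $W$ be its column span, so $W$ is Haar on $\Gr_d$ and $W^{\perp}$ is Haar on $\Gr_{n-d-1}$. Let $\mu_{W_0}$ denote the regular conditional law of a standard Gaussian $(n-1)\times(n-d-1)$ matrix given that its column span equals $W_0$; by $O(n-1)$-invariance of the Gaussian law and $O(n-1)$-equivariance of the column-span map, this family is defined for Haar-almost-every $W_0$ and is $O(n-1)$-equivariant in $W_0$. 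Now, conditionally on $\tilde X$, sample $\tilde Y\sim\mu_{W^{\perp}}$. Then the column span of $\tilde Y$ equals $W^{\perp}$, which is orthogonal to $W$; and because $W^{\perp}$ is Haar-distributed on $\Gr_{n-d-1}$, the defining property of the disintegration shows that $\tilde Y$ has the correct marginal law, that of a standard Gaussian $(n-1)\times(n-d-1)$ matrix. Transporting $(\tilde X,\tilde Y)$ back through $B$ gives the coupling asserted in the lemma.

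The one delicate point — the step I expect to need the most care — is the conditioning on the probability-zero event $\{\text{column span}=W_0\}$: it has to be handled via an honest disintegration of the standard Gaussian law along the (continuous, equivariant) column-span map, with a verification that the resulting family of conditional laws is equivariant, so that plugging in the random subspace $W^{\perp}$ is legitimate. It is worth recording a tempting shortcut that \emph{fails}: one cannot take $\tilde Y$ to be a standard Gaussian matrix projected onto $W^{\perp}$ (``i.i.d.\ Gaussian inside $W^{\perp}$''), since then its columns would have marginal covariance $\tfrac{n-d-1}{n-1}I$ rather than $I$; the true conditional law $\mu_{W_0}$ is genuinely heavier — for instance when $n-d-1=1$ the squared length of its single column is $\chi^2_{n-1}$, not $\chi^2_1$. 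Everything else (almost-sure full rank of the Gaussian matrices, the existence of $B$, and invariance of column-orthogonality under $B$) is routine bookkeeping.
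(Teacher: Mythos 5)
Your proof is correct, but it takes a genuinely different route from the paper's. You first strip off the all-ones constraint by an isometry $B\colon\R^{n-1}\to\mathbf 1^{\perp}$ with $BB^{\top}=I_n-\frac1nJ_n$, reducing the lemma to coupling two \emph{standard} Gaussian matrices of sizes $(n-1)\times d$ and $(n-1)\times(n-d-1)$ with orthogonal column spans, and you then obtain the second matrix by disintegrating the standard Gaussian law along the column-span map and sampling from the regular conditional law $\mu_{W^{\perp}}$, using that $W^{\perp}$ is Haar-distributed on the Grassmannian to recover the correct marginal. The paper instead works directly in the ambient space with the $\mathbf 1$-constraint in place: it generates \emph{both} matrices independently from the stated distributions and applies a Haar-random orthogonal transformation fixing $\mathbf 1$ that carries the orthocomplement of the second matrix's column span onto the span of the first matrix's columns together with $\mathbf 1$; the marginal is preserved because, after averaging over the uniform and independent target subspace, the applied rotation is a uniformly random element of the stabilizer of $\mathbf 1$, under which $\nu$ is invariant (Observation~\ref{symmorth1}). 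Both arguments rest on the same two facts---rotation invariance of Gaussians and Haar measure on the Grassmannian---but the paper's coupling is fully constructive and sidesteps regular conditional probabilities, whereas yours outsources the measure-theoretic work to the disintegration theorem, the one genuinely delicate point, which you correctly flag; your observation that the ``project an independent Gaussian onto $W^{\perp}$'' shortcut gives the wrong conditional law is accurate and shows the care needed there. Your swap of the roles of $n$ and $d$ relative to the lemma's statement is harmless, as it matches the form in which the lemma is actually invoked in the proof of Lemma~\ref{l.galegauss}.
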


\noindent First we prove Lemma \ref{l.galegauss} from \ref{l.cplanecoupling}.

\begin{proof}[Proof of Lemma \ref{l.galegauss}]
For a sequence of points $\p_1,\dots,\p_n$ in $\R^d$, we define the points 
\[
\bar \p_1,\dots,\bar \p_d\in \R^n
\]
as the rows of the matrix whose columns are the points $\p_i$.  In particular, writing $x^i$ for the $i$th coordinate of the point $x$, we have that $\bar \p_i=\{p_1^i,\dots,p_n^i\}$.

Note that we have that 
\[
\{\p_i\}_{i=1}^n\sim \cpoints_n^d \iff \{\bar \p_j\}_{j=1}^d\sim \cplane_d^n,\]
Similarly, we have that\[
\{\q_i\}_{i=1}^n\sim \cpoints_n^{n-d-1}\iff \{\bar \q_j\}_{j=1}^{n-d-1}\sim \cplane_{n-d-1}^n.
\]
Thus, to prove the statement it suffices to give a coupling $(X,Y)$ of the distributions $\cplane_d^n, \cplane_{n-d-1}^n$, with the property that $X$ is always orthogonal to $Y$, and this coupling is given by Lemma \ref{l.cplanecoupling}.
\end{proof}

Our proof of Lemma \ref{l.cplanecoupling} we will give an explicit such coupling, which will thus give a constructive proof of Lemma \ref{l.galegauss} as well.  Informally: we simply generate $X\sim \nu_n^d$ and $W\sim \nu_{d-n-1}^d$, and then apply an orthogonal transformation to $W$ so that in the resulting matrix $Y$, all column vectors are orthogonal to all column vectors of $X$, as well as to $\mathbf{1}$, which is feasible since $n+(d-n-1)+1=n$.  By choosing the orthogonal transformation randomly from all possible such transformations, the orthogonal transformation is a uniformly random orthogonal transformation even if we condition on $W$ (but not $X$), and thus the spherical symmetry of standard Gaussian vectors ensures that $Y$ is still distributed as $\nu_{d-n-1}^d.$
\begin{proof}
For fixed $k$ and $d$, we begin by considering, for any two linear $k$-subspaces $A$ and $B$ of $\R^d$ that are orthogonal to $\mathbf{1}$, the collection $\Phi_{k,d}(A,B)$ of orthogonal $d\times d$ matrices which send $A$ to $B$ and fix $\mathbf{1}$.  As a subgroup of the orthogonal group, $\Phi_{k,d}(A,B)$ admits a Haar measure, via which we can define $\phi_{A,B}$ to be a \emph{random} orthogonal matrix mapping $A$ to $B$ and fixing $\mathbf{1}$.

    Now we give an algorithm to generate the coupled variables $(X,Y)$.  We begin by generating $X$ as 
    \[
    X=(I_d-\frac 1 d J_d)\Z_{d\times n}.
    \]For an independent copy $\Z'_{d\times n}$ of $\Z_d$, we generate a random matrix $W$ as
    \[
    W=(I_d-\frac 1 d J_d)\Z_{d\times n}.
    \]
    Note that with probability 1, all the columns of $X$ and $W$ are linearly independent.

    We append an all-1's column to $X$ to make a matrix $X'$ (by construction of $X$, this new column is orthogonal to the columns of $X$ and of $W$).  Now let $B$ denote the subspace spanned by the columns of $X'$, and $A$ denote the subspace orthogonal to the columns of $W$.  Note that with probability 1, $\dim(A)=\dim(B)=d+1$.  We choose the random $\phi_{A,B}$ and set $Y=\phi_{A,B} W$.  Observe that the columns of $Y$ are orthogonal to the columns of $X'$.
    
    We claim that $Y$ is also identical in distribution to $W$, so that $(X,Y)$ is indeed a coupling of $\nu_n^d$ and $\nu_{d-n-1}^d$.  Indeed, by the spherical symmetry of Gaussians, the subspace $B$ is uniformly random.  It is also independent of $W$.  Thus with $\phi_{A,B}$ chosen from the Haar measure on orthogonal transformations sending $A$ to $B$, $\phi_{A,B}$ is a uniformly random orthogonal transformation fixing $\mathbf{1}$, even after conditioning on any general position choice for $W$ (and thus $A)$.  Thus $Y$ has the same distribution as a uniformly random rotation of $W$ which fixes $\mathbf{1}$, and thus by Observation \ref{symmorth1}, has the same distribution of $W$, as claimed.
    \end{proof}

We finally prove the main theorem, Theorem \ref{t.equiv}.
\begin{proof}[Proof of Theorem \ref{t.equiv}]
    Fix $d$ and $k \leq \lfloor \frac{d + 1}{2} \rfloor$. For $d + 2$ points in $\R^d$, $x_1, \dots, x_{d + 2}$ in general position, the Gale dual is $d + 2$ points $y_1, \dots, y_{d + 2}$ on the real line with $y_1 + \cdots + y_{d + 2} = 0$. The Radon partition is $A \sqcup B$ so that $A = \{x_i \mid y_i < 0\}$ and $B = \{x_j \mid y_j > 0\}$. For Gaussian random points the event that $X_d = k$ is the event $\{|A| = k\} \cup \{|B| = k\}$. The probability that $|A| = k$ for $x_1,\dots, x_{d + 2}$ iid standard Gaussians in $\R^d$ is the same as for $(x_1,\dots, x_{d + 2}) \sim \rho_{d + 2}^d$ as $\rho_{d + 2}^d$ can be sampled as a translation by the barycenter from the standard Gaussian. By Lemma \ref{l.galegauss}, for $(x_1, \dots, x_{d + 2}) \sim \rho_{d + 2}^d$, $(y_1, \dots, y_{d+2}) \sim \rho_{d + 2}^1$. This means that $(y_1,\dots, y_{d+2})$ is sampled by taking $d + 2$ independent standard Gaussians on the real line and shifting by the barycenter. Thus $\Pr(|A| = k) = P(d + 2, k)$. So if $k \neq \frac{d + 1}{2}$, then $\Pr(X_d = k) = \Pr(|A| = k) + \Pr(|B| = k) = \Pr(|A| = k) + \Pr(|A| = d + 2 - k) = P(d + 2, k) + P(d + 2, d + 2 - k) = 2P(d + 2, k)$, and if $k = \frac{d + 1}{2}$ then $\Pr(X_d = k) = \Pr(|A| = k) = P(d + 2, k)$.
\end{proof}

\begingroup
\bibliography{References}
\bibliographystyle{amsplain}
\endgroup

\end{document}